\documentclass[12pt]{article}
\usepackage[table]{xcolor}
\usepackage{soul}
\usepackage{amsfonts}
\usepackage{amsthm}
\usepackage{amssymb}
\usepackage{graphicx, enumerate}
\usepackage{amsmath}
\usepackage{latexsym}
\usepackage{longtable}
\usepackage{tabularx}
\usepackage{amsmath}
\usepackage{amsfonts}
\usepackage{amsthm}
\usepackage{setspace}
\usepackage{graphicx}
\usepackage{float}
\usepackage{rotating}
\usepackage{tikz}
\usepackage{verbatim}

\newtheorem{thm}{Theorem}[section]
\newtheorem{lem}[thm]{Lemma}
\newtheorem{obs}[thm]{Observation}
\newtheorem{prop}[thm]{Proposition}

\newtheorem{prb}[thm]{Open Problem}

\makeatletter
\def\imod#1{\allowbreak\mkern10mu({\operator@font mod}\,\,#1)}
\makeatother

\newcommand{\MRS}{\mathrm{MRS}}
\newcommand{\LMRS}{\mathrm{LMRS}}
\newcommand{\SMS}{\mathrm{SMS}}
\newcommand{\MS}{\mathrm{MS}}
\newcommand{\MR}{\mathrm{MR}}

\begin{document}
\title{Note on a magic rectangle  set on dihedral group }

\author{Sylwia Cichacz \\AGH University of Krak\'ow, Poland}

\maketitle
\begin{abstract}
Let $\Gamma$ be a group of order $mnk$ and $\MRS_{\Gamma}(m,n;k)=(a_{i,j}^s)_{m\times n}$ be a collection of $k$ arrays $m\times n$  whose entries are all distinct elements of $\Gamma$. If there exist elements $\rho,\sigma\in\Gamma$ such that for every row $i$, there exists an ordering of elements such that 
	$$
		a_{i,j_1}^s a_{i,j_2}^s \dots a_{i,j_{n-1}}^s a_{i,j_n}^s= \rho
	$$
	and for every column $j$ there exists an ordering of elements such that 
	$$
		a_{i_1,j}^s a_{i_2,j}^s \dots a_{i_{m-1},j}^s a_{i_m,j}^s = \sigma,
	$$
	then $\MRS_{\Gamma}(m,n;k)$ is called a \emph{$\Gamma$-magic rectangle  set}.
    
We investigate magic rectangle sets over dihedral groups and prove that 
$\mathrm{MRS}_{\Gamma}(m,n;k)$ exists for every dihedral group $\Gamma$ of order $mnk$, 
provided that $m$ and $n$ are even. As a consequence, we obtain broad existence 
results for magic rectangles and magic squares over dihedral groups. 

\noindent\textbf{Keywords:} magic square, magic rectangle  set,  magic constant, dihedral group \\
\noindent\textbf{MSC:} 05B15, 05E99, 05B30
\end{abstract}

 \section{Introduction}
A \emph{magic square} of order $n$ is an $n \times n$ array containing the integers $1,2,\ldots,n^2$, each appearing exactly once, such that the sum of every row, every column, and both main diagonals is equal to the constant $n(n^2+1)/2$.  
The earliest known example is the $3 \times 3$ \emph{Lo Shu magic square}, recorded in ancient Chinese literature as early as 2800 B.C. Since then, magic squares and their numerous generalizations have been extensively studied. 

In recent years, algebraic generalizations of magic-type structures have been investigated, where the additive condition is replaced by multiplication in a group (see~\cite{CicJaco,CicFro,CicFro2,Cichacz-Hinc-2,Evans,Froncek_dih_SMS_n=0mod4,Pellegrini,Sun-Yihui,ref_YuFengLiu}). In this paper, we focus on magic rectangles defined over groups.

Let $\Gamma$ be a group of order $mnk$ and a $\Gamma$-\textit{magic rectangle set} $\MRS_{\Gamma}(m,n;k)=(a_{i,j}^s)_{m\times n}$ be a collection of $k$ arrays $m\times n$  whose entries are all distinct elements of $\Gamma$. If there exist elements $\rho,\sigma\in\Gamma$ such that for every row $i$, there exists an ordering of elements such that 
	$$
		a_{i,j_1}^s a_{i,j_2}^s \dots a_{i,j_{n-1}}^s a_{i,j_n}^s = \rho
	$$
	and for every column $j$ there exists an ordering of elements such that 
	$$
		a_{i_1,j}^s a_{i_2,j}^s \dots a_{i_{m-1},j}^s a_{i_m,j}^s = \sigma,
	$$
	then $\MRS_{\Gamma}(m,n;k)$ is called a \emph{$\Gamma$-magic rectangle  set}.
	If, for every row and column, the ordering is
	\begin{equation*}
		a_{i,1}^s a_{i,2}^s \dots a_{i,{n-1}}^s a_{i,n}^s = \rho
	\end{equation*}
	and  
	\begin{equation*}
		a_{m,j}^s a_{m-1,j}^s \dots a_{{2},j}^s a_{1,j}^s = \sigma,
	\end{equation*}
	then $\MRS_{\Gamma}(m,n;k)$ is \emph{linearly $\Gamma$-magic denoted by $\LMRS_{\Gamma}(m,n;k)$.} 

    Let $\Gamma$ be a group of order $n^2$ and $\MR_{\Gamma}(n,n)=MRS_{\Gamma}(n,n;1)$ be a {$\Gamma$-magic rectangle.} If the row and column products are equal, that is, $\rho=\sigma$, then we call $\MR_{\Gamma}(n,n)$ a \emph{$\Gamma$-semi-magic square} and if moreover the products of both the main and backward main diagonals are equal to $\rho=\sigma$, then $\MR_{\Gamma}(n,n)$ is called a \emph{$\Gamma$-magic square}.

The dihedral group $D_{l}$ of order $2l$ consists of $l$ \emph{rotations} $r_i$ and $l$ \emph{reflections} $s_i$. The rotations form a cyclic subgroup of order $l$, and each reflection generates a subgroup of order 2.  

Formally, for $l \ge 3$, the \emph{dihedral group} $D_l$ can be defined as the set
\[
\{ r_0, r_1, \dots, r_{l-1}, s_0, s_1, \dots, s_{l-1} \},
\]  
where $r_0 = e$ is the identity, $r_1 = r$, $r_i = r^i$ for $i = 0,1,\dots,l-1$, $s_0 = s$, $s_i = r^i s$, $s_i^2 = e$, and the multiplication satisfies
\[
r^i s = s r^{-i}, \quad \text{for } i=0,1,\dots,l-1.
\]

An important property of $D_l$ will be used in our constructions. It follows directly from the definition.

\begin{prop}\label{prop:sr^is}
	In any dihedral group $D_l$, we have
	\[
	s r^i s = r^{-i}, \quad \text{for every } i=0,1,\dots,l-1.
	\]
\end{prop}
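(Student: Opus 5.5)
The plan is to derive the identity directly from the defining relation $r^i s = s r^{-i}$ together with $s^2=e$, where $s=s_0$. No earlier result is needed beyond the presentation of $D_l$ given above.

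Fix $i\in\{0,1,\dots,l-1\}$. Start from $s r^i s$ and regroup it as $s\,(r^i s)$. By the defining relation, $r^i s = s r^{-i}$, so
\[
s r^i s = s\,(s r^{-i}) = s^2 r^{-i}.
\]
Since $s=s_0$ is a reflection, $s^2=e$, and therefore $s r^i s = r^{-i}$, which is the claim.

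One minor point deserves care. Here $r^{-i}$ means $r^{l-i}$, which is the rotation $r_{l-i}$ (indices taken modulo $l$), and the relation $r^i s = s r^{-i}$ is stated for exactly these exponents. So the manipulation stays within the given presentation. There is no real obstacle: the only step that requires anything is recognising that the defining relation moves $s$ past $r^i$ at the cost of inverting the rotation.

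As a sanity check, one can alternatively note that conjugation by $s$ is an automorphism sending $r\mapsto s r s = r^{-1}$ (the case $i=1$). Raising this to the $i$-th power gives $s r^i s = (s r s)^i = r^{-i}$, which agrees with the direct computation.
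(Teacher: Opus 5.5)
Your proof is correct and is exactly the argument the paper intends when it says the proposition ``follows directly from the definition'': regroup $sr^is$ as $s(r^is)=s\,sr^{-i}$ and then use $s^2=s_0^2=e$. The paper writes out no further proof. Your conjugation-automorphism remark is a valid independent check, but it is not needed.
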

In \cite{Cichacz-Hinc-2} authors asked about  necessary and sufficient conditions for existence of $\LMRS_{\Gamma}(m,n;k)$, where $\Gamma$ is a non-Abelian group of order $mnk$. Recently, semi-magic squares over dihedral groups were studied in \cite{CicFro2}, where the authors proved the following result.
\begin{thm}[\cite{CicFro2}]\label{thm:mainCicFro}
	There exists a $\Gamma$-semi-magic square $\SMS_{\Gamma}(n)$, where $\Gamma$ is a dihedral group, {if and only if $n$ is even and} $n\geq4$.
\end{thm}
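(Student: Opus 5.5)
Necessity is immediate from the setup. A $\Gamma$-semi-magic square $\SMS_{\Gamma}(n)$ needs $|\Gamma|=n^2$, and $|D_l|=2l$, so $n^2=2l$. Hence $n$ is even, say $n=2t$, and $l=2t^2$. Since dihedral groups are only defined for $l\ge 3$, we get $n^2\ge 6$, so $n\ge 4$. The real work is sufficiency: for every even $n\ge 4$ we must place the $l=n^2/2$ rotations and $l$ reflections of $D_l$ into an $n\times n$ array with a common row/column product $\rho=\sigma$.

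The plan is to reduce the problem to an additive problem in the cyclic group $\mathbb{Z}_l$, using Proposition~\ref{prop:sr^is}. That proposition gives $s_a s_b=r^{a}s\,r^{b}s=r^{a-b}$. More generally, a product in which rotations $r^{i_1},\dots,r^{i_p}$ come first, followed by reflections $s_{a_1},\dots,s_{a_q}$, equals
\[
r^{\,i_1+\dots+i_p+a_1-a_2+a_3-\cdots}, \quad\text{or the same rotation times } s \text{ when } q \text{ is odd}.
\]
Because orderings are free in the definition, I will fix the following arrangement.
\begin{itemize}
\item Every row and every column contains exactly $t$ rotations and $t$ reflections. This makes all products of the same type: rotations if $t$ is even, reflections if $t$ is odd.
\item The condition then becomes: a signed sum of the indices in each line, with the signs on the reflection indices chosen by us (half $+$, half $-$ roughly), is a constant $c\in\mathbb{Z}_l$.
\end{itemize}
A natural way to guarantee the balanced distribution is to tile the square by $2\times2$ blocks. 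Each block has the pattern $\begin{smallmatrix} r & s\\ s& r\end{smallmatrix}$, so every line meets each block in one rotation and one reflection.

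Next I would choose the indices. The rotation exponents are split into complementary pairs $\{i,-i\}$ (with $0$ and $l/2$ handled separately), and the reflection indices into pairs $\{a,a+d\}$ with a fixed difference $d$. Each line is arranged to receive whole pairs, or contributions cancelling to a fixed amount, in both the row and the column direction. Concretely, I would fill the $t\times t$ array of blocks using a pattern in which block $(x,y)$ receives indices depending linearly on $x+y$ and $x-y$ modulo $t$. Each row of blocks and each column of blocks then runs over a complete residue system, so the row and column sums coincide. This is a Latin-square-type argument.

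The main obstacle I expect is the parity case $n\equiv 2 \pmod 4$, i.e.\ $t$ odd. There an odd number of reflections per line prevents perfect pairing of the reflection indices, and one leftover term $\pm a$ must be absorbed while keeping all entries distinct. I would try first to treat $n\equiv0\pmod4$ by the block construction directly. For $n\equiv 2\pmod 4$, I would build an explicit base square for $n=6$ and then extend it by adjoining $4\times 4$ block borders, verifying that the sums in $\mathbb{Z}_l$ shift uniformly when $l$ grows. If the extension fails, the fallback is a direct formula with a separate correction on one distinguished row and column.
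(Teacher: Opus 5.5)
Your necessity argument is fine: $n^2=2l$ forces $n$ to be even, and the convention $l\ge 3$ excludes $n=2$. This paper does not prove the theorem itself; it quotes it from \cite{CicFro2}. The relevant comparison is therefore with the constructions the paper does give.

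The genuine gap is sufficiency, where you offer a plan rather than a construction. You leave the case $n\equiv 2\pmod 4$ entirely open (``I would build an explicit base square for $n=6$\ldots'', ``if the extension fails, the fallback is\ldots''). This is precisely the hard case. Every line contains an odd number of reflections, so all $2n$ row and column products must equal one and the same reflection $r^cs$. You never exhibit an assignment of the $n^2/2$ rotation exponents and reflection indices, together with orderings, that achieves this. The $2\times 2$ tiling by itself does not resolve it. The blocks $M^p$ of Lemma~\ref{dwa} have exactly your $\begin{smallmatrix} r & s\\ s & r\end{smallmatrix}$ pattern, and with linear orderings they give $\rho=(rs)^{n/2}=rs\neq s=s^{n/2}=\sigma$ when $n/2$ is odd. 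That is why the paper obtains semi-magic squares only for $n\equiv 0\pmod 4$ and relies on \cite{CicFro2} for the remaining case.

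Your $n\equiv 0\pmod 4$ sketch also fails as stated. Suppose the indices placed in block $(x,y)$ depend only on $x+y$ and $x-y$ modulo $t$. For even $t$ the map $(x,y)\mapsto(x+y,x-y)$ is two-to-one on $\mathbb{Z}_t^2$, so blocks $(x,y)$ and $(x+t/2,\,y+t/2)$ receive identical entries. Distinctness therefore breaks down, and the pairing bookkeeping is never verified.

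The paper's route for this case avoids any global Latin-square argument:
\begin{itemize}
\item each block $M^p$ of Lemma~\ref{dwa} has row products $rs$ and (bottom-to-top) column products $s$, independently of $p$;
\item the $n^2/4$ blocks use pairwise disjoint elements that exhaust $D_{n^2/2}$;
\item tiling the square with them (Theorem~\ref{glowne}) gives $\rho=(rs)^{n/2}$ and $\sigma=s^{n/2}$, both equal to $e$ when $n/2$ is even.
\end{itemize}
To complete your proof you would need one of two things. Either use such a blockwise-constant gadget together with a genuinely new idea for $n\equiv 2\pmod 4$, or give an explicit index formula whose line sums in $\mathbb{Z}_{n^2/2}$ you actually verify.
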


They also posed several open problems concerning linear constructions, fully magic squares, and general magic rectangles over dihedral groups.
\begin{prb}[\cite{CicFro2}]\label{prb:linear}
	Construct linearly $\Gamma$-semi-magic squares $\SMS_{\Gamma}(n)$, where $\Gamma$ is a dihedral group, for every  $n\geq4$.
\end{prb}
\begin{prb}[\cite{CicFro2}]\label{prb:all-gamma-semimagic}
	Construct $\Gamma$-magic squares $\MS_{\Gamma}(n)$, where $\Gamma$ is a dihedral group, for every even $n$, $n\geq4$.
\end{prb}
\begin{prb}[\cite{CicFro2}]\label{prb:all-gamma-magic}
	Characterize  $\Gamma$-magic rectangles $\MS_{\Gamma}(m,n)$, where $\Gamma$ is a dihedral group of order $mn/2$.
\end{prb}
The aim of this paper is to address the above problems. 
We construct linearly $\Gamma$-semi-magic squares $\SMS_\Gamma(n)$ as well as 
$\Gamma$-magic squares $\MS_\Gamma(n)$, where $\Gamma$ is a dihedral group, 
for every integer $n \ge 4$ with $n \equiv 0 \pmod{4}$. 
Moreover, we prove that for all even integers $m$ and $n$, 
a magic rectangle $\MR_{\Gamma}(m,n)$ exists when $\Gamma$ is the dihedral group $D_l$. 
Conversely, if $l$ is odd, then no such magic rectangle $\MR_{D_l}(m,n)$ exists.

\section{A $D_l$-Magic Rectangle Set}

We begin with several simple observations.
\begin{obs}\label{obs1}
 If a $\Gamma$-magic rectangle set $\MRS_{\Gamma}(m, n; k)$ on group $\Gamma$ exists, then both $\MRS_{\Gamma}(m, nk;1)$ and
$\MRS_{\Gamma}(mk, n;1)$ exist.
\end{obs}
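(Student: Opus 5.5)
The natural approach is to place the $k$ arrays side by side to get an $m\times nk$ array, and to stack them vertically to get an $mk\times n$ array. In both cases we then check the magic conditions directly from the definition, using that the orderings in a general $\MRS$ may be chosen independently for each row and each column.

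\emph{The $m\times nk$ array.} Let $\MRS_{\Gamma}(m,n;k)=(a_{i,j}^s)$ have row product $\rho$ and column product $\sigma$. Define the $m\times nk$ array $B$ by $b_{i,(s-1)n+j}=a_{i,j}^s$ for $1\le s\le k$, $1\le j\le n$. Its entries are exactly the $mnk$ distinct elements of $\Gamma$, as required for $\MRS_\Gamma(m,nk;1)$.

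\begin{itemize}
\item Each column of $B$ is a column of some array $a^s$. It therefore keeps its ordering with product $\sigma$, so the new column constant is $\sigma$.
\item Row $i$ of $B$ is the concatenation of the $i$-th rows of $a^1,\dots,a^k$. We order it block by block, each block ordered so that its product is $\rho$. The product is then $\rho^k$, and this is the same for every $i$. So the new row constant is $\rho'=\rho^k$.
\end{itemize}

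Group non-commutativity causes no trouble here. We never need to interleave elements from different blocks; we only multiply the block products in a fixed order, and every block contributes the same element $\rho$.

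\emph{The $mk\times n$ array.} Symmetrically, stack the arrays vertically: $c_{(s-1)m+i,j}=a_{i,j}^s$.

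\begin{itemize}
\item Each row of $C$ is a row of some $a^s$, so it has product $\rho$.
\item Each column of $C$ is the union of the $j$-th columns of $a^1,\dots,a^k$. Ordering it block by block gives product $\sigma^k$.
\end{itemize}

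Again all entries are distinct and exhaust $\Gamma$, so $C$ is an $\MRS_\Gamma(mk,n;1)$.

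\emph{Main obstacle.} The only point needing care is the definition of an $\MRS$. It allows an arbitrary ordering per row and per column, so the concatenated ordering is legitimate and we never need to rearrange elements within the products. For the linear version the same argument would work provided the blocks are placed in the order the linear convention reads them: left to right for rows, and bottom to top for columns.
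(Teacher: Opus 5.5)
Your proposal is correct and follows the same approach as the paper. You concatenate the $k$ arrays horizontally, ordering each new row block by block so its product is $\rho^k$, while the column products stay $\sigma$. The vertical stacking, which you write out explicitly and the paper only treats ``analogously,'' works the same way with the roles of rows and columns swapped.
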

\begin{proof} Let $\{M^0, M^1, \dots, M^{k-1}\}$ be a $\Gamma$-magic rectangle set $\MRS_{\Gamma}(m,n;k)$. That is, each $M^p=(a_{i,j}^p)_{m\times n}$  with entries from $\Gamma$, and for each rectangle $M^p$ for every row $i$ there exists an ordering of elements such that $$ a_{i,j_{p(1)}}^p a_{i,j_{p(2)}}^p \dots a_{i,j_{p(n)}}^p = \rho $$ and for every column $j$ there exists an ordering of elements such that $$ a_{i_{p(1)},j}^p a_{i_{p(2)},j}^p \dots a_{i_{p(m)},j}^p  = \sigma. $$
Define a single $m \times nk$ rectangle $R$ by concatenating all rectangles $M^p$ horizontally:
\[
R = [\, M^0 \mid M^1 \mid \dots \mid M^{k-1} \,],
\]  
where $M^p$ occupies columns $pn+1, \dots, (p+1)n$ of $R$.

Consider row $i$ of $R$. It is the concatenation of the $i$th rows of $M^0,M^1 \ldots, M^{k-1}$. Using the orderings from each $M^s$, the product along row $i$ is  
    \[
    \prod_{p=0}^{k-1} \Big( a_{i,j_{p(1)}}^p a_{i,j_{p(2)}}^p \dots a_{i,j_{p(n)}}^p \Big) = \underbrace{\rho \rho \cdots \rho}_{k\text{ times}} = \rho^k,
    \]  
    which is a fixed group element independent of the row.
Each column of $R$ comes from a column of some $M^p$. Since the column products in $M^p$ satisfy the original condition, the column product in $R$ is still $\sigma$.  

Analogously we can prove that  $\MRS_{\Gamma}(mk, n;1)$ exists.\end{proof}
\begin{obs}\label{odd} Let $l$ be an odd integer and $D_l$ be a dihedral group of order $mnk$. There does not exist an $\MRS_{D_l}(m, n; k)$.
\end{obs}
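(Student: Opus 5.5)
We argue by contradiction, using the sign homomorphism $\varphi\colon D_l\to\mathbb{Z}_2$ that sends every rotation $r_i$ to $0$ and every reflection $s_i$ to $1$. This is a homomorphism because $s_i s_j$ is a rotation and $r_i s_j$ is a reflection. Under $\varphi$, the product along any row or column, in whatever order the elements are multiplied, maps to the number of reflections in that row or column, taken mod $2$.

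Suppose an $\MRS_{D_l}(m,n;k)$ exists with constants $\rho,\sigma$.

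\textbf{Step 1: parity of the dimensions.} Since $|D_l| = 2l = mnk$ and $l$ is odd, $mnk \equiv 2 \pmod 4$. Hence at most one of $m,n,k$ is even, and that one is $\equiv 2 \pmod 4$. In particular, at least one of $m,n$ is odd.

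\textbf{Step 2: counting reflections by rows.} Let $\varepsilon=\varphi(\rho)$. Every row of every array contains a number of reflections congruent to $\varepsilon$ mod $2$. There are $mk$ rows in total, and altogether the arrays contain exactly $l$ reflections. Therefore
\[
l \equiv mk\,\varepsilon \pmod 2 .
\]

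\textbf{Step 3: counting reflections by columns.} In the same way, with $\delta=\varphi(\sigma)$, there are $nk$ columns in total, so
\[
l \equiv nk\,\delta \pmod 2 .
\]

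\textbf{Step 4: contradiction.} Since $l$ is odd, both $mk$ and $nk$ must be odd. Then $m$, $n$ and $k$ are all odd, so $mnk$ is odd. This contradicts $mnk = 2l$ being even.

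The argument avoids Step 1 entirely, and it does not depend on the orderings chosen in each row or column, because $\varphi$ maps to an abelian group.
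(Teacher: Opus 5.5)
Your proof is correct and is essentially the paper's argument: both rest on the parity of the number of reflections (the sign map $D_l\to\mathbb{Z}_2$). The only difference is packaging. The paper assumes without loss of generality that $m$ is odd, concatenates the arrays via Observation~\ref{obs1}, and compares $\sigma^{nk}$ (a rotation, since $nk$ is even) with the product of all of $D_l$ (a reflection, since $l$ is odd); you count reflections over all rows and all columns at once, which removes the need for that case choice and for Observation~\ref{obs1}. As you note, Step 1 is never used and can be deleted.
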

\begin{proof}
Without loss of generality, we may assume that $m$ is odd. Since $|D_l| = 2l = mnk$, it follows that $nk$ is even. 

Suppose that there exists an $\MRS_{D_l}(m,n;k)$. Then, by Observation~\ref{obs1}, there exists an $\MRS_{D_l}(m, nk;1)$, which we denote by $(a_{i,j})_{m \times nk}$.  

Since every reflection in $D_l$ has the form $r^i s$ and the product of two reflections is a rotation, it follows that (after rewriting the word by repeatedly grouping reflections into pairs) the product of an odd number of reflections is always a reflection, independent of the initial ordering. Thus by Proposition~\ref{prop:sr^is}, it is clear that the product of all elements of $D_l$ cannot be a pure rotation, i.e.,
\[
\prod_{g \in D_l} g \neq r^w \quad \text{for any ordering of the elements}.
\]  

On the other hand, for each column $j$ of $(a_{i,j})_{m \times nk}$, there exists an ordering of its elements such that $a_{i_1,j} a_{i_2,j} \cdots a_{i_m,j} = \sigma.$ Hence, considering all entries of the rectangle, there exists an ordering such that
\[
\prod_{g \in D_l} g = \sigma^{kn}.
\]  
But since $kn$ is even, we have $(\sigma)^{kn} = r^h$ for some $h \in \{0,1,\dots,l-1\}$, a contradiction.
\end{proof}

Observation~\ref{odd} implies the following observations immediately:
\begin{obs}\label{aodd} If  exactly one of the numbers $m,n,k$ is even and moreover congruent to $2$ modulo $4$, then  there does not exist a magic rectangle set $\MRS_{D_l}(m, n; k)$ on any dihedral group $D_l$ of order $mnk$.
\end{obs}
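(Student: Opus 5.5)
The plan is to reduce Observation~\ref{aodd} directly to Observation~\ref{odd}: we only need to show that, under the stated hypothesis, the parameter $l$ of the dihedral group $D_l$ of order $mnk$ is odd. Once $l$ is known to be odd, Observation~\ref{odd} rules out any $\MRS_{D_l}(m,n;k)$, and we are done.

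The key step is a parity computation. Suppose exactly one of $m,n,k$ is even, and that this one is congruent to $2$ modulo $4$. The other two factors are odd. Write the even factor as $2t$ with $t$ odd. Then
\[
2l = |D_l| = mnk = 2\cdot t\cdot(\text{product of the two odd factors}),
\]
so $l = t\cdot(\text{product of two odd numbers})$, which is odd. I would state this explicitly, since it is the only content beyond Observation~\ref{odd}. Note also that $mnk=2l$ is automatically even, so the hypothesis that exactly one factor is even is consistent with $D_l$ having order $mnk$.

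Since $l$ is odd, the hypotheses of Observation~\ref{odd} hold verbatim. Its proof chose an odd factor to play the role of $m$ ``without loss of generality''; here there are in fact two odd factors among $m,n,k$, so this choice is available. Observation~\ref{odd} then gives the nonexistence of $\MRS_{D_l}(m,n;k)$.

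There is essentially no obstacle. The only point to check is that the case $l\ge 3$ built into the definition of $D_l$ causes no issue: the statement concerns dihedral groups as defined in the paper, so $l\ge3$ is assumed throughout, and the argument does not depend on it anyway.
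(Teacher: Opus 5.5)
Your proposal is correct and follows the paper's route exactly: the paper just says Observation~\ref{odd} implies this immediately, and your computation that a single even factor $\equiv 2 \pmod 4$ forces $l=mnk/2$ to be odd is the step the paper leaves implicit. Your aside about the ``without loss of generality'' choice is not needed, since Observation~\ref{odd} applies verbatim once $l$ is odd. (Strictly, the odd factor needed there must be one of $m,n$ rather than $k$, and this holds automatically here because only one of the three factors is even.)
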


\begin{obs}\label{zodd} If $l$ is odd there does not exist a magic rectangle  $\MR_{D_l}(m,n)$ for any $mn=2l$.
\end{obs}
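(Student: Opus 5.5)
This is an immediate specialization of Observation~\ref{odd}. A $\Gamma$-magic rectangle $\MR_{D_l}(m,n)$ is, by definition, a magic rectangle set $\MRS_{D_l}(m,n;1)$, that is, the case $k=1$ with $mn=2l=|D_l|$. So I would fix an odd $l$ and integers $m,n$ with $mn=2l$, suppose an $\MR_{D_l}(m,n)$ exists, and apply Observation~\ref{odd} with $k=1$ to obtain a contradiction.

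The only thing to check is that the hypotheses of Observation~\ref{odd} hold in this setting. That observation needs $l$ odd and $|D_l|=mnk$. Here $l$ is odd by assumption and $mnk=mn=2l$, so both hypotheses are satisfied.

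It is also worth noting why the argument inside that proof applies. Since $mn=2l$ with $l$ odd, $mn\equiv 2\pmod 4$, so exactly one of $m,n$ is odd. The proof of Observation~\ref{odd} takes the odd dimension to be $m$ (without loss of generality). It then uses the fact that the product of all elements of $D_l$, in any order, contains an odd number $l$ of reflections, so it is a reflection. But grouping by columns gives $\sigma^{n}$ with $n$ even, which is a rotation. This contradiction rules out the magic rectangle.

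There is no real obstacle here. The only subtlety is the symmetric case where $n$ is the odd dimension. That case is handled by the same argument with rows in place of columns: group by rows and use $\rho^m$ with $m$ even. This is exactly the ``without loss of generality'' step in Observation~\ref{odd}, and it applies unchanged.
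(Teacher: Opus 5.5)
Your proposal is correct and matches the paper, which obtains this observation immediately from Observation~\ref{odd} by taking $k=1$. Your remark that the case where $n$ is the odd dimension is handled symmetrically (grouping by rows and using $\rho^m$ with $m$ even) correctly spells out the paper's ``without loss of generality'' step.
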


\begin{obs}\label{zodd2} If $l$ is odd there does not exist a magic rectangle  $\MRS_{D_{2l}}(2,l;2)$.
\end{obs}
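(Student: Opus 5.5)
The plan is to push everything into an abelian quotient of $D_{2l}$. There the orderings allowed in the definition of a magic rectangle set no longer matter, and I expect a parity count to finish the argument. Observation~\ref{odd} cannot be used directly, because here the dihedral parameter $2l$ is even. Since $2l$ is even, $N=\langle r^2\rangle$ is a normal subgroup of index $4$. The quotient $D_{2l}/N$ is the Klein group $\mathbb{Z}_2\times\mathbb{Z}_2$. Let $\varphi\colon D_{2l}\to\mathbb{Z}_2\times\mathbb{Z}_2$ be the quotient map, written additively. It sends $r^i\mapsto(i \bmod 2,0)$ and $r^is\mapsto(i\bmod 2,1)$. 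Each of the four fibres of $\varphi$ therefore contains exactly $l$ elements, and $l$ is odd. Because the target is abelian, any product of a multiset of elements, in any order, maps to the sum of their images. So a row of a $\MRS_{D_{2l}}(2,l;2)$ has image sum $\varphi(\rho)=:\rho'$, and a column has image sum $\varphi(\sigma)=:\sigma'$.

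Suppose that $\{M^0,M^1\}$ is an $\MRS_{D_{2l}}(2,l;2)$. Each $M^p$ has $2$ rows of length $l$ and $l$ columns of length $2$. Every column is a pair $\{x,y\}$ with $\varphi(x)+\varphi(y)=\sigma'$. I then split into two cases according to $\sigma'$.

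\emph{Case $\sigma'=0$.} Each column consists of two elements from the same fibre of $\varphi$. The $2l$ columns of the two rectangles partition $D_{2l}$, so they partition each fibre into pairs. Each fibre has odd size $l$, so this is impossible.

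\emph{Case $\sigma'=a\neq 0$.} In $M^p$, the column structure gives a bijection between the entries of row $1$ and the entries of row $2$. Under this bijection an entry with image $x$ corresponds to an entry with image $x+a$. Summing over the $l$ columns, the image sum of row $2$ equals the image sum of row $1$ plus $l\cdot a$. Since $l$ is odd, $l\cdot a=a\neq 0$. Both rows, however, must have image sum $\rho'$, which is a contradiction.

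Both cases are impossible, so no $\MRS_{D_{2l}}(2,l;2)$ exists when $l$ is odd. The only step needing care is choosing the right invariant. Counting reflections alone, as in Observation~\ref{odd}, gives no contradiction here. One has to pass to the full abelianization $\mathbb{Z}_2\times\mathbb{Z}_2$, which is available exactly because the parameter $2l$ is even, and then use that the columns have length $2$ while the rows have odd length $l$.
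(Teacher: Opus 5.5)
Your proof is correct, but it takes a different route from the paper's. The paper uses only the coarser invariant $D_{2l}\to\mathbb{Z}_2$ that records whether an element is a reflection, which is the second coordinate of your $\varphi$. Its two cases are these. If every column contains exactly one reflection, the two rows of an array contain $l$ reflections in total, so their parities differ, contradicting the common row product $\rho$. If every column contains $0$ or $2$ reflections, the two rows of each array contain equally many reflections. Since all rows have the same parity, the total number of reflections is then $\equiv 0 \pmod 4$, whereas $D_{2l}$ has $2l\equiv 2\pmod 4$ reflections.

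Your case $\sigma'\neq 0$ subsumes the paper's first case and also covers $\sigma$ an odd rotation. It can be shortened further: summing $\varphi$ over one array gives $2\rho'=0$ by rows and $l\sigma'=\sigma'$ by columns, so $\sigma'=0$. In the remaining case the Klein-four quotient lets you replace the paper's mod-$4$ count with a cleaner fact: the columns would have to pair up each coset of $\langle r^2\rangle$, and each coset has odd size $l$. So your route gives a more transparent second case, while the paper's gets by with a smaller invariant.

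Your closing remark is mistaken, however. Counting reflections does suffice here, provided it is done row by row and column by column rather than globally as in Observation~\ref{odd}. Passing to the full abelianization is a convenience, not a necessity.
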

\begin{proof}Suppose, for a contradiction, that a magic rectangle
$\MRS_{D_{2l}}(2,l;2)$ exists.
Let $M^{1}$ and $M^{2}$ denote the two $2\times l$ arrays.
Let $\rho \in D_{2l}$ and $\sigma \in D_{2l}$ be the common row product
and the common column product (respectively), which are the same for
both arrays by definition.

For $p\in\{1,2\}$ and $i\in\{1,2\}$, let $m_i^p$ be the number of
reflections appearing in row $i$ of $M^p$.
Similarly, for $p\in\{1,2\}$ and $j\in\{1,\dots,l\}$, let $n_j^p$ denote
the number of reflections in column $j$ of $M^p$.

Since the product of the elements in each row is $\rho$, the parity of
the number of reflections is the same in every row of any array.
Hence $m_i^p \equiv m_{i'}^{p'} \pmod 2$. Likewise, since each column product is $\sigma$, we have
$n_j^p \equiv n_{j'}^{p'} \pmod 2$.

First, suppose that $n_j^p$ is odd.
As each array has exactly two rows, this forces
$n_j^p=1$ for each $j$ and $p$. Since $l$ is odd, this implies that the parities of
$m_1^p$ and $m_2^p$ must differ, contradicting the fact that all row
parities of reflections in $M^p$ are equal.

Now suppose that $n_j^p$ is even.
As before, since there are only two rows, we must have
$n_n^p\in\{0,2\}$, which implies $m_1^p=m_2^p$ for $p\in\{1,2\}$. Since all row parities of the number of reflections are equal, we obtain $m_1^1+m_2^1+m_1^2+m_2^2 \equiv 0 \pmod 4$. On the other hand, the total number of entries in both arrays is $2l$,
which is congruent to $2 \pmod 4$ because $l$ is odd.
This contradiction completes the proof.
\end{proof}

Now we show that for $m$ and $n$ both even, a linearly magic rectangle set $\MRS_{D_l}(m, n; k)$  can be
constructed for any $k$ and any dihedral group $D_l$ of order $mnk$. Observe that, since $m$ and $n$ are even, it follows that  $l$ is even as well, because $2l=mnk$.
We start with an elementary step.
\begin{lem}\label{dwa} Let $l>1$ be an  integer and $D_{2l}$ be a dihedral group of order $4l$. There  exists an $\LMRS_{D_{2l}}(2, 2; l)$ such that the row product is $\rho=rs,$ the column product is $\sigma=s$.
\end{lem}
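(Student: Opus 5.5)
The plan is to give an explicit construction. Write $D_{2l}$ as $\{r^0,\dots,r^{2l-1},s,rs,\dots,r^{2l-1}s\}$ with $r^{2l}=e$. Each array is $\begin{pmatrix} x & y\\ z & w\end{pmatrix}$, and the linear conditions we need are
\[
xy=rs,\quad zw=rs,\quad zx=s,\quad wy=s .
\]
First we pin down the shape. A product of two elements is a reflection exactly when it has one rotation and one reflection. Since $rs$ and $s$ are reflections, every row and every column of every array must contain exactly one rotation and one reflection. So we may take $x,w$ to be rotations and $y,z$ to be reflections.

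Next we solve the equations in terms of the single parameter $x=r^a$:
\begin{itemize}
\item $xy=rs$ gives $y=r^{-a}rs=r^{1-a}s$.
\item $zx=s$ gives $z=sr^{-a}=r^{a}s$, using $r^is=sr^{-i}$.
\item $wy=s$ gives $w=sy^{-1}=sy=sr^{1-a}s=r^{a-1}$, by Proposition~\ref{prop:sr^is}.
\end{itemize}
The one remaining equation must then be checked: $zw=r^a s\,r^{a-1}=r^a r^{1-a}s=rs$. So for every $a$ the array
\[
A_a=\begin{pmatrix} r^{a} & r^{1-a}s\\ r^{a}s & r^{a-1}\end{pmatrix}
\]
is linearly magic, with row product $\rho=rs$ and column product $\sigma=s$ (columns read bottom to top).

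It remains to pick $l$ values of $a$ so that the arrays partition $D_{2l}$. We take $a=2t+1$ for $t=0,1,\dots,l-1$.
\begin{itemize}
\item The rotations used are $r^{2t+1}$ and $r^{2t}$, which give every exponent modulo $2l$ exactly once.
\item The reflections used are $r^{2t+1}s$, covering all odd exponents.
\item The other reflections are $r^{-2t}s$. Since $t\mapsto -2t \pmod{2l}$ is a bijection from $\{0,\dots,l-1\}$ onto the even residues, these cover all even exponents exactly once.
\end{itemize}
Hence all $4l$ elements appear exactly once, which gives the required $\LMRS_{D_{2l}}(2,2;l)$.

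The only delicate point is this distinctness check: we need the two reflection families (odd and even exponents) to be disjoint and each to be injective modulo $2l$. The parity choice $a$ odd is made precisely to guarantee this.
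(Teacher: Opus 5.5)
Your construction is correct and is exactly the paper's: with $a=2t+1$, your array $A_a$ has entries $r^{2t+1}$, $r^{-2t}s$, $r^{2t+1}s$, $r^{2t}$, which is precisely the paper's square $M^t$. You go slightly further by deriving this one-parameter family from the linear equations and by writing out the check that the $l$ arrays together use every element of $D_{2l}$ exactly once, which the paper leaves implicit.
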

\begin{proof} Let  $M^p=(m^p_{i,j})_{2\times 2}$ for $p\in\{0,1,\ldots,l-1\}$. Set  \\

 \begin{figure}[H]
 \begin{center}
 $M^p$=\begin{tabular}{|c|c|}
 \hline

  $r^{2p+1}$  & $r^{-2p}s$ \\ \hline
$r^{2p+1}s$  & $r^{2p}$ \\ \hline
   \end{tabular}
 \caption{Square $M^p$\label{fig:Mp}}
 \end{center}
 \end{figure}

Observe that $$\rho^p_i=a_{i,1}^pa_{i,2}^p=rs,$$ $$\sigma^p_j=a_{2,j}^pa_{1,j}^p=s,$$

This finishes the proof.\end{proof}

\begin{thm}
If $m,n$ are both even, then a linearly magic rectangle set $\MRS_{D_{2l}}(m, n; k)$ exists
for every $k$ and $4l=mnk$. Moreover, the row product is $\rho=(rs)^{n/2}$ and the column product is $\sigma=s^{m/2}$. \label{glowne}
\end{thm}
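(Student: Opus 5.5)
We build the required set from the $2\times 2$ blocks of Lemma~\ref{dwa}. Since $4l=mnk$ with $m,n$ even, we have $l=\frac{m}{2}\cdot\frac{n}{2}\cdot k$. Lemma~\ref{dwa} then gives exactly $l$ squares $M^0,\dots,M^{l-1}$ of size $2\times 2$. Their entries partition $D_{2l}$: the entries $r^{2p+1}$ and $r^{2p}$ run over all rotations, and $r^{-2p}s$ and $r^{2p+1}s$ run over all reflections as $p$ ranges over $0,\dots,l-1$. Each block has linear row product $rs$ (left to right) and linear column product $s$ (bottom to top).

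We partition the $l$ blocks into $k$ groups of $\frac{mn}{4}$ blocks each. In each group we tile an $m\times n$ array with its blocks, arranged as an $\frac{m}{2}\times\frac{n}{2}$ grid of $2\times 2$ blocks. The $k$ resulting arrays use every element of $D_{2l}$ exactly once.

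Next we check the linear products.
\begin{itemize}
\item A row of an $m\times n$ array, read left to right, is the concatenation of the corresponding rows of $\frac{n}{2}$ blocks taken in order. Since each block row multiplies to $rs$ linearly, the row product is $(rs)^{n/2}$.
\item A column, read bottom to top ($a_{m,j}\cdots a_{1,j}$), is the concatenation of block columns. It starts with the bottom block and moves upward, and each block contributes its column read bottom to top. This is exactly the order in which each block column multiplies to $s$. Hence the column product is $s^{m/2}$.
\end{itemize}
These are the claimed values $\rho=(rs)^{n/2}$ and $\sigma=s^{m/2}$. The values are the same for every row and column of every array, so the set is linearly magic.

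The only delicate point is the ordering convention for columns. It has to match the convention of Lemma~\ref{dwa}: bottom row first within each block, and blocks concatenated from bottom to top. With these orders matched, no commutation is needed, which matters because $D_{2l}$ is non-abelian. The rest is bookkeeping. Finally, $s^{m/2}$ is $e$ or $s$ according to the parity of $m/2$, and $(rs)^{n/2}$ is likewise $e$ or $rs$, since $rs$ is a reflection. The statement does not require simplifying these further.
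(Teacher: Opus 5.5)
Your proposal is correct and follows the paper's proof essentially verbatim. You take the $l=\frac{m}{2}\cdot\frac{n}{2}\cdot k$ blocks $M^p$ of Lemma~\ref{dwa}, tile each of the $k$ arrays as an $\frac{m}{2}\times\frac{n}{2}$ grid of blocks, and read off the linear row product $(rs)^{n/2}$ and the bottom-to-top column product $s^{m/2}$. Your explicit checks, that the blocks partition $D_{2l}$ and that the column ordering convention matches the lemma's, are welcome details the paper leaves implicit.
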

\begin{proof} There exists an  $\MRS_{\Gamma}(2, 2;l)$ by Lemma~\ref{dwa}.
Let $\{M^0, M^1, \dots, M^{l-1}\}$ be a $\Gamma$-magic rectangle set $\MRS_{\Gamma}(2,2;l)$ which exists by Lemma~\ref{dwa}.

Let $m'=m/2$ and $n'=n/2$. For $u\in\{0,1,\ldots, k-1\}$ define a single rectangle $R^u$ of size $m \times n$ by concatenating the rectangles $M^{um'n'},M^{um'n'+1} \dots,$ $ M^{(u+1)m'n'-1}$ in the way as is shown in Figure~\ref{fig:Ru}.

 \begin{figure}[H]
 \begin{center}
 $R^u$=\begin{tabular}{|c|c|c|c|}
 \hline
     $M^{um'n'}$  & $M^{um'n'+1}$&$\dots$&$M^{um'n'+n'-1}$ \\ \hline
     $M^{um'n'+n'}$  & $M^{um'n'+n'+1}$&$\dots$&$M^{um'n'+2n'-1}$ \\  \hline
     $\vdots$& $\vdots$&$\dots$&$\vdots$\\ \hline
 $M^{(u+1)m'n'-n'}$  & $M^{(u+1)m'n'-n'+1}$&$\dots$&$M^{(u+1)m'n'-1}$ \\  \hline

   \end{tabular}
 \caption{A square $R^u$}\label{fig:Ru}
 \end{center}
 \end{figure}
That is, each $R^u=(b_{i,j}^u)_{m\times n}$  with entries from $D_{2l}$, and for each rectangle $R^u$ for every row $i$ there is $$b_{i,1}^u b_{i,2}^u \dots b_{i,n-1}^ub_{i,n}^u = (rs)^{n'}=(rs)^{n/2} $$ 
and for every column $j$ there is $$b_{m,j}^u b_{m-1,j}^u \dots b_{2,j}^ub_{1,j}^u = s^{m'}=s^{m/2}. $$ 

\end{proof}

As an immediate consequence of Theorem~\ref{glowne}, we have.
\begin{thm}\label{linear}
    There exists an $\mathrm{LSMS}_\Gamma(n)$, where $\Gamma$ is a dihedral group, for every $n\equiv 0\pmod4$, $n\geq4$. Moreover, for $n\equiv 0\pmod 8$ it is a {linearly $\Gamma$-magic square}.
\end{thm}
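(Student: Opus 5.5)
The plan is to specialize Theorem~\ref{glowne} to the square case $m=n$, $k=1$. A dihedral group of order $n^2$ with $4\mid n$ is $D_{2l}$ with $4l=n^2$, i.e.\ $l=n^2/4>1$. Theorem~\ref{glowne} then gives a linearly magic $n\times n$ array $R^0$ over $D_{2l}$ with row product $\rho=(rs)^{n/2}$ and column product $\sigma=s^{n/2}$. Since $rs$ and $s$ are reflections, they are involutions. Because $n\equiv 0\pmod 4$, the exponent $n/2$ is even, so $\rho=\sigma=e$. This gives the $\mathrm{LSMS}_\Gamma(n)$ at once.

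For $n\equiv 0\pmod 8$ we also need both diagonals to multiply to $e$ in the linear order. I first observe that the proof of Theorem~\ref{glowne} allows the $2\times 2$ blocks $M^p$ of Lemma~\ref{dwa} to be placed in the $n'\times n'$ block grid ($n'=n/2$) in \emph{any} order. Every block contributes $rs$ to each of its rows and $s$ to each of its columns, so row and column products stay equal to $e$ under any placement.

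Next I compute the block contributions to the diagonals. Along the main diagonal, block $M^p$ contributes $r^{2p+1}$ and $r^{2p}$, with product $r^{4p+1}$. These are rotations, so the order of multiplication does not matter. Along the backward diagonal, block $M^p$ contributes $r^{-2p}s$ and then $r^{2p+1}s$. By Proposition~\ref{prop:sr^is} their product is $r^{-2p}\,s r^{2p+1}s=r^{-4p-1}$. The backward-diagonal product is therefore also a product of rotations.

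Since $n'$ is even, the diagonal and antidiagonal block positions are disjoint. The problem thus reduces to a number-theoretic one. Working modulo $2l=n^2/2=2n'^2$, I must choose two disjoint $n'$-subsets $P,Q\subseteq\{0,\dots,n'^2-1\}$ with
\[
n'+4\sum_{p\in P}p\equiv 0 \pmod{2n'^2},\qquad n'+4\sum_{q\in Q}q\equiv 0\pmod{2n'^2}.
\]
Since $4\mid n'$ when $8\mid n$, these conditions are equivalent to
\[
\sum_{p\in P} p\equiv\sum_{q\in Q}q\equiv -n'/4 \pmod{n'^2/2}.
\]
The remaining blocks then fill the off-diagonal positions arbitrarily.

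The main obstacle is constructing $P$ and $Q$ explicitly. My first attempt will use complementary pairs $\{p,\,n'^2-1-p\}$. A set of $n'/2$ such pairs has sum $\frac{n'}{2}(n'^2-1)\equiv -n'/2\pmod{n'^2/2}$, which overshoots the target by $-n'/4$. To fix this, I would replace one complementary pair by a pair $\{a,b\}$ with $a+b\equiv n'^2-1+n'/4\pmod{n'^2/2}$, for example $a+b=n'^2/2-1+n'/4$ with $a,b$ small and distinct. I would then take the remaining pairs from the far ends of $\{0,\dots,n'^2-1\}$, chosen disjoint from $\{a,b\}$. I would build $Q$ the same way, using different pairs so that $P\cap Q=\emptyset$. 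There is plenty of room, since only $2n'$ of the $n'^2\ge 16$ values are used.
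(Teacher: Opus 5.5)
Your proposal is correct and follows essentially the paper's proof. You specialize Theorem~\ref{glowne} to $m=n$, $k=1$ for the semi-magic part. You then place the Lemma~\ref{dwa} blocks indexed by two disjoint $n/2$-sets, each with sum $\equiv -n/8 \pmod{n^2/8}$, on the block diagonal and block anti-diagonal; these are the paper's $A$ and $B=A+8k^2$ when $n=8k$.

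The only difference is your choice of index sets: complementary pairs $\{p,n'^2-1-p\}$ plus one correcting pair. Your explicit disjointness count is worthwhile here. For $k\ge 2$ the paper's listed $A$ repeats the element $8k^2-k$, since it already occurs as $8k^2-i$ with $i=k$, so that $A$ does not actually have $4k$ elements.
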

\begin{proof}
By Theorem~\ref{glowne}, there exists an $\mathrm{LMRS}_{\Gamma}(n,n;1)$ for the dihedral group $\Gamma$ of order $n^2$. Moreover, the row product is $\rho = (rs)^{n/2}$ and the column product is $\sigma = s^{n/2}$. Since $n \equiv 0 \pmod 4$, we have $n/2$ even. In the dihedral group, every reflection has order $2$, therefore, we have $\rho = \sigma = r^0$.

Assume now that $n\equiv 0\pmod 8$. Thus $n=8k$ and $\Gamma=D_{32k^2}$ for $k\geq 1$. Observe that in the proof of Lemma~\ref{dwa} every square $M^p$ of size $2\times 2$ has the diagonal sums $$\delta^p_1=a_{1,1}^pa_{2,2}^p=r^{2p+1}r^{2p}=r^{4p+1}, \;\;\;\;\delta^p_2=a_{1,2}^pa_{2,1}^p=r^{2p+1}sr^{-2p}s=r^{4p+1}$$
Let $A=\{0,2,6,7\}$ for $k=1$ and \[
A=\{1,8k^2-1,2,8k^2-2,\dots,(2k-1),8k^2-(2k-1),0,8k^2-k\}
\] 
otherwise. Observe that $|A|=4k$, $A\subset \{0,1,\ldots,8k^2-1\} $
and $\sum\limits_{a\in A}a\equiv-k\pmod{8k^2}$.
Let \[
B=\{a+8k^2:\ a\in A\}.
\] 
This implies $|B|=4k$,  $B\subset \{8k^2,8k^2+1\ldots,16k^2-1\} $, $\sum\limits_{b\in B}b\equiv-k\pmod{8k^2}$. Therefore  $$\prod_{a\in A}\delta^a_1=\prod_{a\in A}r^{4a+1}=r^{\sum_{a\in A}(4a+1)}=r^{-4k\pmod{32k^2}+4k}=r^0$$
and analogously $\prod_{b\in B}\delta^b_2=r^0.$

 If now we define a  $8k \times 8k$ square by concatenating all squares $M^p$ for $p\in\{0,1,\ldots,16k^2-1\}$ such that on the main diagonal are squares $M^a$ for $a\in A$ and on the backward diagonal $M^b$ for $b\in B$, then we obtain an $\mathrm{LMS}_{D_{32k^2}}(8k)$, which completes the proof.
\end{proof}

 \begin{figure}[H]
$$
\begin{array}{||c|c||c|c||c|c||c|c||}
\hline\hline
\cellcolor{lightgray}{r} & \cellcolor{lightgray}{r^0s}  & r^3  & r^{-2}s & r^7 & r^{-6}s  &\cellcolor{yellow} {r^{17}}  & \cellcolor{yellow}{r^{-16}s}\\ \hline
\cellcolor{lightgray}{rs}  & \cellcolor{lightgray}{r^{0}} & r^{3}s & r^{2}& r^{7}s  & r^{6} & \cellcolor{yellow}{r^{17}s} & \cellcolor{yellow}{r^{16}s} \\ \hline\hline
r^{9}  &r^{-8}s  &\cellcolor{lightgray}{r^{ 5}}  & \cellcolor{lightgray}{r^{-4}s}&\cellcolor{yellow}{r^{31}}  &\cellcolor{yellow}{r^{-30}s}  &r^{ 11}  & r^{-10}s \\ \hline
r^{9}s  & r^{8}s & \cellcolor{lightgray}{r^{5}s} & \cellcolor{lightgray}{r^{4}} &\cellcolor{yellow}{r^{31}s}  & \cellcolor{yellow}{r^{30}} & r^{11}s & r^{10} \\ \hline\hline
r^{19} & r^{-18}s & \cellcolor{yellow}{r^{21}} & \cellcolor{yellow}{r^{-20}s}&\cellcolor{lightgray}{r^{15}}  & \cellcolor{lightgray}{r^{-14}s} & r^{23} & r^{-22}s \\ \hline
r^{ 19}s  & r^{18}  &\cellcolor{yellow}{r^{21}s}  & \cellcolor{yellow}{r^{20}}&\cellcolor{lightgray}{r^{15}s}  &\cellcolor{lightgray}{r^{ 14}}  &r^{23}s  & r^{22} \\ \hline\hline
\cellcolor{yellow}{r^{29}}  & \cellcolor{yellow}{r^{-28}s} & r^{25} & r^{-24}s &r^{27}  & r^{-26}s & \cellcolor{lightgray}{r^{13}}  & \cellcolor{lightgray}{r^{-12}s}  \\ \hline
\cellcolor{yellow}{r^{29}s}  &\cellcolor{yellow}{r^{ 28}}  &r^{25}s  & r^{24}&r^{27}s  &r^{ 26} & \cellcolor{lightgray}{r^{13}s}  &\cellcolor{lightgray}{r^{12}}\\ \hline\hline
\end{array}
$$

 \caption{ $\mathrm{LMS}_{D_{32}}(8)$ with the magic constant $\mu=r^{0}$}

 \end{figure}

\begin{thm}\label{linear}
    There exists a  $\Gamma$-magic square $\mathrm{MS}_\Gamma(n)$, where $\Gamma$ is a dihedral group of order $n^2$, for every $n\equiv 0\pmod4$, $n\geq4$.
\end{thm}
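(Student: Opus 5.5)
The case $n\equiv 0\pmod 8$ is already covered by the preceding theorem, so I would only treat $n=8k+4$, where $\Gamma=D_{2L}$ with $4L=n^2$, i.e.\ $L=4(2k+1)^2$. As there, I would tile the $n\times n$ square by the $2\times 2$ blocks $M^p$ of Lemma~\ref{dwa}, $p\in\{0,\dots,L-1\}$, arranged as in Theorem~\ref{glowne}. Since $n/2=4k+2$ is even and $rs$ and $s$ are reflections, every row product $(rs)^{n/2}$ and every column product $s^{n/2}$ equals $r^0$. So the semi-magic part is free, and the whole difficulty lies in the two diagonals. Here, unlike the linear case, I may choose the order in which the diagonal entries are multiplied.

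The obstruction is that the linear argument cannot simply be copied. A diagonal block $M^a$ placed unchanged on the main diagonal contributes two rotations, with product $r^{4a+1}$. With $4k+2$ such blocks the exponent is $4\sum_{a\in A}a+(4k+2)\equiv 2\pmod 4$, while it must vanish modulo $2L=8(2k+1)^2$. This is impossible, and reordering does not help because rotations commute. So some diagonal blocks must be changed.

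I would use the column-swapped block $\widetilde M^p$, with columns $(r^{-2p}s,\ r^{2p})$ and $(r^{2p+1},\ r^{2p+1}s)$. Its column multisets are unchanged, so the column products still work. On the main diagonal it contributes two reflections, whose product can be $r^{\pm(4p+1)}$ depending on the order. The key counting tool is that a product of rotations with exponent sum $X$ and $2t$ reflections $r^{y_i}s$, taken in a suitable order, equals
\[
r^{X+\sum_{i\in I}y_i-\sum_{i\notin I}y_i}
\]
for any $I$ with $|I|=t$. The signs on the reflections are therefore ours to choose, up to the balancing constraint.

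The plan is as follows. First, pick two disjoint index sets $A$ and $B$ of size $4k+2$ for the main and backward diagonal blocks. Second, swap columns in a prescribed subset of these blocks. Third, choose signs so that each diagonal's total exponent is $\equiv0\pmod{2L}$, in the spirit of the symmetric sets $\{a,\,L'-a\}$ used in the preceding theorem. Swapping a block also changes the reflection exponents appearing in its two rows. I would repair those rows by the same sign-choice freedom: the row's rotations and reflections can be reordered, and swapping two blocks in the same block-row lets the extra terms $\pm 4(p-q)$ cancel, or be absorbed by pairing signs.

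The main obstacle is this simultaneous bookkeeping. Each swapped block alters two rows and one diagonal, so I would swap blocks in pairs lying in the same block-row, chosen symmetrically in $p$. I would first verify the smallest case $n=4$, i.e.\ $D_8$, by hand to fix the pattern, and then generalize it with explicit sets $A$ and $B$.
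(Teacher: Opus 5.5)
Your obstruction for unswapped main-diagonal blocks is correct, and so is your reflection-pairing formula. The gap is that the proposal stops exactly where the proof has content. You never say which blocks are swapped, what $A$ and $B$ are, or how the signs are chosen, so for $n\equiv 4\pmod 8$ nothing has actually been constructed. Moreover, the difficulty you call the main obstacle does not exist. Exchanging the two columns of $M^p$ leaves each row of the block with the same two elements, $\{r^{2p+1},r^{-2p}s\}$ and $\{r^{2p+1}s,r^{2p}\}$. Under free ordering, every row of the big square therefore still multiplies to $(rs)^{n/2}=r^0$, no matter which blocks are swapped. There is nothing to repair, and swapping blocks in pairs within a block-row is an unnecessary constraint.

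The missing step is short:
\begin{itemize}
\item Swap exactly the main-diagonal blocks and leave all others as in Lemma~\ref{dwa}. Then both diagonals consist only of reflections. A swapped block $a$ puts $r^{-2a}s,\,r^{2a+1}s$ on the main diagonal, and an unswapped block $b$ puts $r^{-2b}s,\,r^{2b+1}s$ on the backward diagonal. Pairing inside each block yields $r^{\pm(4a+1)}$, resp.\ $r^{\pm(4b+1)}$, with each sign free.
\item Since $r$ has order $2L$, we have $4(x+L/2)+1\equiv 4x+1\pmod{2L}$. Take $A=X\cup(X+L/2)$ and $B=Y\cup(Y+L/2)$ for disjoint $(n/4)$-subsets $X,Y$ of $\{0,\dots,L/2-1\}$; this is possible because $n/2\le n^2/8$.
\item Give $x$ and $x+L/2$ opposite signs. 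Both diagonal products then become $r^0$. For $n=4$ this is $A=\{0,2\}$, $B=\{1,3\}$.
\end{itemize}

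This works uniformly for every $n\equiv 0\pmod 4$, so you do not need the linear theorem for $n\equiv 0\pmod 8$. That is just as well: in its proof, with $n=8k$, the listed set $A$ contains $8k^2-k$ twice whenever $k\ge 2$.

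The paper's route is different and avoids choosing index sets altogether. Its block $\tilde M^p$, with rows $r^{2p-1}s,\ r^{-2p}$ and $r^{2p-1},\ r^{2p}s$, has diagonal contributions $r^{\pm 1}$ that do not depend on $p$. It is placed in the upper half of the square and its row-swapped version in the lower half. On each diagonal, $n/4$ factors $r$ then cancel $n/4$ factors $r^{-1}$.
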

\begin{proof} Let $n=4k$ for $k\geq 1$.
Let  $\tilde{M}^p=(\tilde{m}^p_{i,j})_{2\times 2}$ for $p\in\{0,1,\ldots,2k^2-1\}$ be defined as  \\

 \begin{figure}[H]
 \begin{center}
 $\tilde{M}^p$=\begin{tabular}{|c|c|}
 \hline
 $r^{2p-1}s$  & $r^{-2p}$ \\ \hline
     $r^{2p-1}$  & $r^{2p}s$ \\ \hline

   \end{tabular},
 \end{center}
 \end{figure}.

For such a square we have  
$$\rho^p_1=a_{1,2}^pa_{1,1}^p=r^{-2p}r^{2p-1}s=r^{-1}s,\;\;\;\rho^p_2=a_{2,2}^pa_{2,1}^p=r^{2p}sr^{2p-1}=rs,$$
$$\sigma^p_1=a_{1,1}a_{2,1}=r^{2p-1}sr^{2p-1}=s,\;\;\; \sigma^p_2=a_{1,2}a_{2,2}=r^{-2p}r^{2p}s=s$$
$$\delta^p_1=a_{2,2}^pa_{1,1}^p=r^{2p}sr^{2p-1}s=r,\;\;\;\delta^p_2=a_{1,2}^pa_{2,1}^p=r^{2p-1}sr^{2p}s=r^{-1}.$$

If now   for  $p\in\{2k^2,2k^2+1,\ldots,4k^2-1\}$  we define  $\tilde{M}^p$ as
 \begin{figure}[H]
 \begin{center}
 $\tilde{M}^p$=\begin{tabular}{|c|c|}
 \hline
      $r^{2p-1}$  & $r^{2p}s$ \\ \hline
 $r^{2p-1}s$  & $r^{-2p}$ \\ \hline

   \end{tabular},
 \end{center}
 \end{figure}
then one can easily see that there exists 
$$\rho^p_1=rs,\;\;\;\rho^p_2=r^{-1}s,\;\;\;\sigma^p_1=\sigma^p_2=s,\;\;\; \delta^p_1=r^{-1},\;\;\;\delta^p_2=r.$$
 Define the square $R$ of size $2k \times 2k$ by concatenating the rectangles $\tilde{M}^0,\tilde{M}^1, \tilde{M}^2 , \dots,\tilde{M}^{4k^2-1}$ in the way as is shown in Figure~\ref{fig:sR}.

 \begin{figure}[H]
 \begin{center}
 $R$=\begin{tabular}{|c|c|c|c|}
 \hline
     $\tilde{M}^0$ & $\tilde{M}^1$&$\dots$&$\tilde{M}^{2k-1}$ \\ \hline
 $\tilde{M}^{2k}$ & $\tilde{M}^{2k+1}$&$\dots$&$\tilde{M}^{4k-1}$ \\ \hline  
 $\vdots$&$\vdots$&$\dots$&$\vdots$\\\hline
    $\tilde{M}^{2k^2-2k}$ & $\tilde{M}^{2k^2-2k+1}$&$\dots$&$\tilde{M}^{2k^2-1}$ \\ \hline\hline
 $\tilde{M}^{2k^2}$ & $\tilde{M}^{2k^2+1}$&$\dots$&$\tilde{M}^{2k^2+2k-1}$ \\ \hline 
  $\vdots$&$\vdots$&$\dots$&$\vdots$\\\hline
   $\tilde{M}^{4k^2-2k}$ & $\tilde{M}^{4k^2-2k+1}$&$\dots$&$\tilde{M}^{4k^2-1}$ \\ \hline

   \end{tabular}
 \caption{A square $R$}\label{fig:sR}
 \end{center}
 \end{figure}

 Taking now the row, column, the main diagonal and the backward diagonal products according to the products in $\tilde{M}^p$, we obtain the row product, column  and both diagonal product $\mu=r^0$, which finishes the proof.

\end{proof}

\begin{figure}[H]
$$
\begin{array}{|c|c|}
\hline
\tilde{M}^{0}&\tilde{M}^{1}\\\hline
\tilde{M}^{2}&\tilde{M}^{3}\\\hline

\hline
\end{array}
=\begin{array}{||c|c||c|c||}
\hline\hline
r^{-1}s&r^0& r^{1}s&r^{-2}\\\hline
r^{-1}&r^0s& r^{1}&r^2s\\\hline\hline
r^{3}&r^4s& r^{5}&r^6s\\\hline
r^{3}s&r^{-4}& r^{5}s&r^{-6}\\\hline\hline

\hline
\end{array}
$$

 \caption{ $\mathrm{MS}_{D_{8}}(4)$ with the magic constant $\mu=r^{0}$}

 \end{figure}
\section{Conclusions}
In this note, we study magic rectangle sets over dihedral groups. In particular, we prove a general existence result: for any dihedral group $D_l$ of order $2l=mnk$, a $D_l$-magic rectangle set $\MRS_{D_l}(m,n;k)$ exists whenever both $m$ and $n$ are even, while for odd $l$ such a construction is impossible. Several natural directions remain open. Perhaps the most compelling is to obtain a complete characterization (up to natural equivalences) of $D_l$-magic rectangle sets.
\section{Statements and Declarations}
The work of the  author was    supported by  AGH University of Krakow under grant no. 16.16.420.054,
funded by the Polish Ministry of Science and Higher Education.
\bibliographystyle{plain}

\end{document}